\documentclass[reqno]{amsart}
\usepackage{hyperref}
\usepackage{latexsym,amssymb,amsfonts}
\usepackage{enumerate}
\usepackage{amsthm}
\usepackage{amsmath}

\begin{document}
\title[]
{Locality and Domination of Semigroups}

\author[]
{Khalid Akhlil}

\address{\newline Akhlil Khalid \newline
Applied Analysis Institute
\newline University of Ulm\newline Ulm,
Germany}

\address{\newline
Polydisciplinary Faculty of Ouarzazate\newline Ibn Zohr
University\newline Ouarzazate, Morocco.}

\email{k.akhlil@uiz.ac.ma}

\dedicatory{Dedicated to O. El-Mennaoui and W. Arendt}

\date{\today}
\thanks{The author was supported, for this work, by ``Deutscher
Akademischer Austausch Dienst``(German Academic Exchange Service)
Grant Number: A/11/97482.} \subjclass[2000]{31C15, 31C25, 47D07,
60H30, 60J35, 60J60, 60J45} \keywords{Robin boundary conditions;
locality; Domination of Semigroups}

\begin{abstract}
We characterize all semigroups $(T(t))_{t\geq0}$ on $L^2(\Omega)$
sandwiched between Dirichlet and Neumann ones, i.e.:
\begin{equation*}\label{eq:san}
 e^{t\Delta_D}\leq T(t)\leq e^{t\Delta_N}\quad,\text{for all }t\geq0
\end{equation*}
in the positive operators sense. The proof uses the well-known
Beurling-Deny and Lejan formula to drop the locality assumption made
usually on the form associated with $(T(t))_{t\geq 0}$.
 \end{abstract}

\maketitle \setlength{\textheight}{19.5 cm}
\setlength{\textwidth}{12.5 cm}
\newtheorem{theorem}{Theorem}[section]
\newtheorem{lemma}[theorem]{Lemma}
\newtheorem{proposition}[theorem]{Proposition}
\newtheorem{corollary}[theorem]{Corollary}
\theoremstyle{definition}
\newtheorem{definition}[theorem]{Definition}
\newtheorem{example}[theorem]{Example}
\theoremstyle{remark}
\newtheorem{remark}[theorem]{Remark}
\numberwithin{equation}{section} \setcounter{page}{1}

\newcommand{\ombar}{\overline{\Omega}}
 \newcommand{\po}{\partial\Omega}
 \newcommand{\me}{\mathcal E}
 \newcommand{\mf}{\mathcal F}
 \newcommand{\Huntild}{\widetilde{H}^1(\Omega)}
\newcommand{\emu}{e^{-t\Delta_{\mu}}}
\newcommand{\eN}{e^{-t\Delta_{N}}}
\newcommand{\eD}{e^{-t\Delta_{D}}}
\newcommand{\dmu}{\Delta_{\mu}}
\newcommand{\RR}{\mathbb R}
\newcommand{\Hun}{H^1(\Omega)}
\newcommand{\rcap}{\mathrm{Cap}_{\ombar}}

\section{Introduction}
Let $\Omega$ be an open set of $\RR^d$. We want to characterize all
semigroups $(T(t))_{t\geq0}$ on $L^2(\Omega)$ sandwiched between
Dirichlet and Neumann ones, i.e.:
\begin{equation}\label{eq:san}
 e^{t\Delta_D}\leq T(t)\leq e^{t\Delta_N}\quad,\text{for all }t\geq0
\end{equation}
in the positive operators sense. In \cite{AW2}, it is proved that
when  the (regular) Dirichlet form $(a,D(a))$ associated with such
semigroups is \textit{local} then there exists a unique positive
measure on the boundary $\Gamma:=\partial\Omega$ which charges no
set of zero relative capacity such that the form is given explicitly
by
\begin{equation}
 a(u,v)=\int_{\Omega}\nabla u\nabla v dx+\int_{\Gamma}
 \widetilde{u}\widetilde{v}d\mu,\quad D(a_{\mu})=
 \widetilde{H^1}(\Omega)\cap L^2(\Gamma,d\mu)
\end{equation}

The regularity of $a$ is needed to ensure the uniqueness of the
measure $\mu$, otherwise the form will be regular just on a subset
$X=\overline{\Omega}\setminus\Gamma_{\infty}$, where
$\Gamma_{\infty}$ is the part of $\Gamma$ where $\mu$ is locally
infinite everywhere, and then $\mu$ can not be unique because
obtained by extension to the whole boundary.

In this paper we focus on the hypothesis of locality and to simplify
we consider $a$ as regular Dirichlet form (if not we do the same as
in \cite{AW2} or \cite{Wa}). The motivation of our main results was
an intuitive remark that one don't need to suppose the locality of
$a$, and that the locality is contained in the fact that $a$ is
sandwiched between two local forms. In fact and as mentioned in
\cite[Remark 6.12]{DGK} one can remark that the semigroup considered
in the Example 4.5 in \cite{AW2} can not be positive(actually it is
eventually positive). It follows that one can not make use of
Ouahabaz's Characterization theorem of semigroup's domination.

This work is organized as follow: In section 2 we recall the
framework and the results of \cite{AW2}. In section 3 we discuss the
example 4.5 in \cite{AW2} and we show why it is not positive. In
section 4 we prove our main result using beurling-Deny and Lejan
(BDL) formula.

\section{Preliminaries}

First of all, we recall the situation treated in \cite{AW2}. Let
$\Omega$ be an open set and $\Gamma:=\partial\Omega$. Let
$\mu:\mathcal B\to[0,\infty]$ be a measure where $\mathcal B$
denotes the $\sigma-$algebra of Borel sets of $\Gamma$. We now
consider the symmetric form $a_{\mu}$ on $L^2(\Omega)$ given by
\[
 a_{\mu}(u,v)=\int_{\Omega}\nabla u\nabla v dx+\int_{\Gamma}u(x)v(x) d\mu
\]
with domain
\[
 D(a_{\mu})=\{u\in\Hun\cap C_c(\ombar):\int_{\Gamma}|u|^2d\mu<\infty\}
\]
Let
\[
 \Gamma_{\mu}=\{z\in\Gamma:\exists r>0 \text{ such that }\mu(\Gamma\cap
 B(z,r))<\infty\}
\]be the part of $\Gamma$ on which $\mu$ is locally finite.

Assume now that $\Gamma_{\mu}\neq\emptyset$, thus $\Gamma_{\mu}$ is
a locally compact space and $\mu$ is a regular Borel measure on
$\Gamma_{\mu}$. We say that $\mu$ is \textbf{admissible} if for each
Borel set $A\subset\Gamma_{\mu}$ one has
\[
 \rcap(A)=0\Rightarrow\mu(A)=0
\]where $\rcap(A)$ refer to the relative capacity.

It is proved in \cite{AW2} that the form $a_{\mu}$ is closable if
and only if $\mu$ is admissible, and the closure of $a_{\mu}$ is
given by
\[
 D(\overline{a}_{\mu})=\{u\in\Huntild:\tilde{u}=0\text{ r.q.e. on }
 \Gamma\setminus\Gamma_{\mu},\int_{\Gamma_{\mu}}|\tilde{u}|^2d\mu<\infty\}
\]
\[
 \overline{a}_{\mu}(u,v)=\int_{\Omega}\nabla u\nabla v dx+
 \int_{\Gamma_{\mu}}\tilde{u}\tilde{v}d\mu
\]
Here $\tilde{u}$ is the relatively quasi-continuous representative
of $u$. Note that the form $(a_{\mu},D(a_{\mu})$ is regular if and
only if $\mu$ is a radon measure on $\Gamma$, which means that
$\Gamma_{\mu}=\Gamma$.

Denote by $\Delta_{\mu}$ the operator associated with
$\overline{a}_{\mu}$. Then it follows that $\Delta_{\mu}$ is a
realization of the Laplacian in $L^2(\Omega)$( see \cite{AW1} and
\cite{Wa} for more details and other properties).

Next we give two trivial examples of measure $\mu$:

\begin{example}

(i) If $\mu=0$ then $D(\overline{a}_{\mu})=\Huntild$ and
$\overline{a}_{\mu}(u,v)=\int_{\Omega}\nabla u\nabla v dx$. Let
$\Delta_N$ be the operator associated with $(a_0,D(a_0))$. It is the
Neumann Laplacian, and it coincides with the usual Neumann Laplacian
when $\Omega$ is bounded with Lipschitz boundary.

(ii) If $\Gamma_{\mu}=\emptyset$, then
$D(\overline{a}_{\mu})=H^1_0(\Omega)$ and
$\overline{a}_{\mu}(u,v)=\int_{\Omega}\nabla u\nabla v dx$. Let
$\Delta_D$ be the operator associated with
$(a_{\infty},D(a_{\infty}))$. It is the Dirichlet Laplacian.
\end{example}

We have the following domination result

\begin{theorem}
 For each admissible measure $\mu$, the semigroup
 $(e^{t\Delta_{\mu}})_{t\geq 0}$ satisfies
\begin{equation}\label{eq:san2}
 e^{t\Delta_D}\leq e^{t\Delta_{\mu}} \leq e^{t\Delta_N}
\end{equation}for all $t\geq 0$ in the sense of positive operators.
\end{theorem}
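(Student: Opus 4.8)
The plan is to invoke Ouhabaz's domination criterion for semigroups generated by symmetric forms — the characterization theorem of semigroup domination alluded to in the introduction. In the form we need it (all semigroups in sight being positivity preserving) it says: if $(a,D(a))$ and $(b,D(b))$ are densely defined closed symmetric forms whose associated semigroups $e^{-tA},e^{-tB}$ are positive, then $e^{-tA}\leq e^{-tB}$ in the sense of positive operators provided $D(a)\subseteq D(b)$, $D(a)$ is an ideal of $D(b)$ (that is, $0\le v\le u$ with $u\in D(a)$ and $v\in D(b)$ forces $v\in D(a)$), and $b(u,v)\le a(u,v)$ for all nonnegative $u,v\in D(a)$. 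I would apply this twice: with the pair $(a_{\infty},\overline{a}_{\mu})$ for the lower estimate $\eD\le\emu$, and with $(\overline{a}_{\mu},a_0)$ for the upper one $\emu\le\eN$. As a preliminary I would record that $\emu$ is positive: since $u\in D(\overline{a}_{\mu})$ implies $u^{+}\in D(\overline{a}_{\mu})$ and
\[
 \overline{a}_{\mu}(u^{+},u^{-})=\int_{\Omega}\nabla u^{+}\nabla u^{-}\,dx+\int_{\Gamma_{\mu}}\widetilde{u^{+}}\,\widetilde{u^{-}}\,d\mu=0,
\]
the first Beurling-Deny criterion applies; positivity of $\eD$ and $\eN$ is classical.

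For the lower estimate, every $u\in H^1_0(\Omega)$ satisfies $\tilde u=0$ r.q.e. on $\Gamma$, hence $H^1_0(\Omega)\subseteq D(\overline{a}_{\mu})$ and $\overline{a}_{\mu}(u,v)=\int_{\Omega}\nabla u\nabla v\,dx=a_{\infty}(u,v)$ on $H^1_0(\Omega)$; in particular the form inequality holds with equality. The ideal property becomes: if $u\in H^1_0(\Omega)$, $v\in D(\overline{a}_{\mu})\subseteq\Huntild$ and $0\le v\le u$, then $v\in H^1_0(\Omega)$; using the characterization $H^1_0(\Omega)=\{w\in\Huntild:\tilde w=0\text{ r.q.e. on }\Gamma\}$ from \cite{AW1}, this follows once we know $0\le\tilde v\le\tilde u$ r.q.e. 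Ouhabaz's criterion then gives $\eD\le\emu$.

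For the upper estimate, $D(\overline{a}_{\mu})\subseteq\Huntild=D(a_0)$ is immediate from the description of the domain, and for nonnegative $u,v\in D(\overline{a}_{\mu})$ one has $\overline{a}_{\mu}(u,v)=\int_{\Omega}\nabla u\nabla v\,dx+\int_{\Gamma_{\mu}}\tilde u\tilde v\,d\mu\ge\int_{\Omega}\nabla u\nabla v\,dx=a_0(u,v)$, the required inequality. The ideal property here is: if $u\in D(\overline{a}_{\mu})$, $v\in\Huntild$ and $0\le v\le u$, then $v\in D(\overline{a}_{\mu})$; granting $0\le\tilde v\le\tilde u$ r.q.e., this is immediate, since $\tilde v=0$ r.q.e. on $\Gamma\setminus\Gamma_{\mu}$ (as $\tilde u$ vanishes there) and, $\mu$ being admissible, $\int_{\Gamma_{\mu}}|\tilde v|^2\,d\mu\le\int_{\Gamma_{\mu}}|\tilde u|^2\,d\mu<\infty$. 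So Ouhabaz's criterion gives $\emu\le\eN$, and together the two estimates yield \eqref{eq:san2}.

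The main obstacle, appearing in both halves, is the order property of the relatively quasi-continuous representative: $0\le v\le u$ in $\Huntild$ should force $0\le\tilde v\le\tilde u$ r.q.e. on $\ombar$. This is the relative-capacity analogue of the familiar statement for ordinary quasi-continuous representatives; I would derive it from the relative-capacity calculus of \cite{AW1} (approximate $u$ and $v$ in $\Huntild$ and pass to subsequences converging r.q.e.), or, since under the standing regularity assumption $\overline{a}_{\mu}$ is a regular Dirichlet form on $\ombar$, from the general theory of regular Dirichlet forms. Everything else is bookkeeping with the domains.
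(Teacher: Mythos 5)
Your proposal is correct and follows essentially the same route as the paper, which simply invokes Ouhabaz's domination criterion and refers to \cite[Theorem 3.1]{AW1} for the details: you verify, exactly as there, the ideal property and the form inequality for the two pairs $(a_{\infty},\overline{a}_{\mu})$ and $(\overline{a}_{\mu},a_0)$, together with positivity of $e^{t\Delta_{\mu}}$ via the first Beurling--Deny criterion. The only technical point you flag, that $0\le v\le u$ a.e.\ forces $0\le\tilde v\le\tilde u$ r.q.e., is indeed the standard lemma on quasi-continuous representatives adapted to the relative capacity, as used in \cite{AW1}.
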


The proof is a simple application of the Ouhabaz criterion and is
given in \cite[Theorem 3.1.]{AW1}. For a probabilistic proof see
\cite{A}.

In \cite[Section 4]{AW1}, W. Arendt and M. Warma explored the
converse of the above Theorem. More precisely, they answered the
following question: Having a sandwiched semigroup between Dirichlet
and Neumann semigroups, can one write the associated form with help
of an admissible measure? The answer was affirmative under a central
hypothesis of locality of the associated Dirichlet form.

The first Theorem says (\cite[Theorem 4.1]{AW1})

\begin{theorem}
 Let $\Omega$ be an open subset of $\RR^d$ with boundary $\Gamma$. Let
 $T$ be a symmetric $C_0-$semigroup on $L^2(\Omega)$ associated with a
 positive closed form $(a,D(a))$. Then the following assertions are equivalent

\begin{enumerate}
 \item[(i)] There exist an admissible measure $\mu$ such that $a=a_{\mu}$
 \item[(ii)] \begin{enumerate}\item[(a)] One has  $e^{t\Delta_D}\leq T(t)\leq
 e^{t\Delta_N}$, $(t\geq0)$.

     \item[(b)] $a$ is local.

     \item[(c)] $D(a)\cap C_c(\ombar)$ is dense in $(D(a),\|.\|_a)$.
     \end{enumerate}
  \end{enumerate}

\end{theorem}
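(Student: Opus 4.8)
Here (a) is exactly Theorem~2.2. For (b), if $u,v\in D(\overline a_\mu)$ have $uv=0$ a.e.\ on $\Omega$, then $\nabla u\cdot\nabla v=0$ a.e.\ on $\Omega$ and the relatively quasi-continuous representative of $uv$ vanishes r.q.e.\ on $\Gamma$, so $\widetilde u\,\widetilde v=0$ r.q.e.\ on $\Gamma$, hence $\mu$-a.e.\ on $\Gamma_\mu$ since $\mu$ is admissible; thus $\overline a_\mu(u,v)=0$. For (c), by the construction recalled in Section~2 the form $\overline a_\mu$ is the closure of $(a_\mu,D(a_\mu))$ and $D(a_\mu)\subseteq C_c(\ombar)$, so $D(a_\mu)=D(\overline a_\mu)\cap C_c(\ombar)$ is by definition dense in $(D(\overline a_\mu),\|\cdot\|_{a_\mu})$.

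\textbf{Direction (ii)$\Rightarrow$(i), setup.} From $e^{t\Delta_D}\le T(t)$ we get $T(t)\ge0$, and from $T(t)\le e^{t\Delta_N}$ we get $0\le T(t)f\le1$ whenever $0\le f\le1$; hence $a$ is a Dirichlet form, regular by (c). Apply Ouhabaz's domination criterion to each inequality in (a). The inequality $e^{t\Delta_D}\le T(t)$ yields that $H^1_0(\Omega)$ is an ideal of $D(a)$, so $H^1_0(\Omega)\subseteq D(a)$ and $a(u,v)\le\int_\Omega\nabla u\cdot\nabla v\,dx$ for $0\le u,v\in H^1_0(\Omega)$; the inequality $T(t)\le e^{t\Delta_N}$ yields that $D(a)$ is an ideal of $D(a_0)=\Huntild$, so $D(a)\subseteq\Huntild$ and $\int_\Omega\nabla u\cdot\nabla v\,dx\le a(u,v)$ for $0\le u,v\in D(a)$. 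Combining these, $a(u,v)=\int_\Omega\nabla u\cdot\nabla v\,dx$ on $H^1_0(\Omega)$, and
\[ b(u,v):=a(u,v)-\int_\Omega\nabla u\cdot\nabla v\,dx \]
is $\ge0$ on $D(a)_+$ and vanishes identically on $H^1_0(\Omega)$.

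\textbf{Direction (ii)$\Rightarrow$(i), the boundary measure.} Since $a$ is a Dirichlet form, $a(|u|,|u|)\le a(u,u)$ while $\int_\Omega|\nabla|u||^2=\int_\Omega|\nabla u|^2$, so $b(u,u)\ge b(|u|,|u|)\ge0$ for every $u\in D(a)$; thus $b$ is a positive symmetric form on $D(a)$. As $b\ge0$ and $b|_{H^1_0(\Omega)}=0$, the Cauchy--Schwarz inequality for $b$ gives $b(u,w)=0$ for all $u\in D(a)$, $w\in H^1_0(\Omega)$; together with the characterization $u\in H^1_0(\Omega)\iff\widetilde u=0$ r.q.e.\ on $\Gamma$ (for $u\in\Huntild$), this shows $b(u,v)$ depends only on the traces $\widetilde u|_\Gamma,\widetilde v|_\Gamma$. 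Now invoke locality of $a$: since $(u,v)\mapsto\int_\Omega\nabla u\cdot\nabla v\,dx$ is local, $b$ is a local, positive, Markovian form "carried by $\Gamma$''. The plan is then to show that $|\widetilde u|_\Gamma|^2\mapsto b(u,u)$ defines a positive linear functional on a suitable lattice of functions on $\Gamma$ (using $b(u,u)=b(|u|,|u|)$, the trace-dependence of $b$, and locality for additivity), and to apply the Riesz--Markov representation theorem to obtain a positive Radon measure $\mu$ on $\Gamma$ with $b(u,v)=\int_\Gamma\widetilde u\,\widetilde v\,d\mu$ on the core $D(a)\cap C_c(\ombar)$. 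Admissibility of $\mu$ follows because if $\rcap(A)=0$ one can choose core functions $u_n$ with $\widetilde u_n\to\mathbf 1_A$ r.q.e.\ near $A$ and $\|u_n\|_a\to0$, whence $\mu(A)\le\liminf b(u_n,u_n)\le\liminf\|u_n\|_a^2=0$. Finally the form $a_\mu$ of Section~2 has $D(a)\cap C_c(\ombar)$ as domain and agrees with $a$ there, so (c) gives $a=\overline a_\mu$, which is (i).

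\textbf{Main obstacle.} The only delicate step is the passage from the abstract local positive form $b$ to the measure $\mu$: one must verify that $|\widetilde u|^2\mapsto b(u,u)$ is \emph{well defined} and \emph{additive}. Well-definedness rests on $b(u,u)=b(|u|,|u|)$ (valid because $b$, being the difference of $a$ and the classical form, inherits $a(|u|)=a(u)$ from locality) together with the fact that $b$ sees only the boundary trace. Additivity is where locality is genuinely used: via the Beurling--Deny/LeJan structure of $a$ one checks that the jump part of $a$ vanishes (locality), the strongly local part equals $\int_\Omega\nabla u\cdot\nabla v\,dx$ (because $a$ restricted to $C_c^\infty(\Omega)$ is the classical Dirichlet form, so the killing part does not charge $\Omega$), and the killing part, supported on $\Gamma$, is exactly $\int_\Gamma\widetilde u\,\widetilde v\,d\mu$; the energy-measure calculus then makes $|\widetilde u|^2\mapsto b(u,u)$ additive. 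Everything else --- the two applications of Ouhabaz, the Cauchy--Schwarz step, and the capacity estimate for admissibility --- is routine once this identification is available. (It is precisely this step that the present paper reorganizes so as to \emph{derive} locality from the sandwich condition rather than assume it.)
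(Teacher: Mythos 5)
This theorem is not proved in the paper at all: it is quoted from Arendt--Warma and stated without proof (the paper cites it as \cite[Theorem 4.1]{AW1}), and the paper's own contribution, Theorem 4.1 of Section 4, is a different statement in which the locality hypothesis (b) is dropped but the form is assumed to be a \emph{regular} Dirichlet form. Within your attempt, the direction (i)$\Rightarrow$(ii) and the two applications of Ouhabaz's criterion, the identity $a=\int_\Omega\nabla u\nabla v\,dx$ on $H^1_0(\Omega)$, and the Cauchy--Schwarz argument showing that $b:=a-\int_\Omega\nabla u\nabla v\,dx$ vanishes against $H^1_0(\Omega)$ are essentially correct (minor caveats: locality of $a_\mu$ is cleanest for disjoint supports or the pair $u^+,u^-$, using that a relatively quasi-continuous function vanishing a.e.\ on $\Omega$ vanishes r.q.e.\ on $\ombar$; and for (ii)(c) you only need density of $D(a_\mu)$ in its closure, not the asserted equality $D(a_\mu)=D(\overline a_\mu)\cap C_c(\ombar)$).

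The genuine gap is precisely the step you yourself label the ``main obstacle'': the construction of $\mu$ and the identification $a=a_\mu$ are announced as a plan, not carried out, and neither of your two suggested routes closes it as stated. The Riesz--Markov route requires proving that $|\widetilde u|^2\mapsto b(u,u)$ is well defined and additive on a sufficiently rich lattice of functions on the boundary; that is the entire content of the hard implication, and Arendt--Warma obtain it by an explicit construction (essentially the proof of \cite[Lemma 1.4.1]{FOT}, as the paper notes in its closing remarks), not by a soft argument. The Beurling--Deny--LeJan route presupposes that $(a,D(a))$ is a regular Dirichlet form on $\ombar$, but (ii)(c) only gives density in the form norm, not uniform-norm density in $C_c(\ombar)$; regularity genuinely fails within the scope of the theorem, e.g.\ for $T(t)=e^{t\Delta_D}$, where (ii) holds with $D(a)=H^1_0(\Omega)$ while the admissible measure in (i) is locally infinite on $\Gamma$, so there is no Radon measure on $\Gamma$ and no functional on $C_c(\Gamma)$ to represent. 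This is exactly why Arendt--Warma (and the Corollary in Section 4 of this paper) introduce $\Gamma_0=\{z\in\Gamma:\exists\,u\in D(a)\cap C_c(\ombar),\ u(z)\neq0\}$, prove regularity of $a$ on $X=\Omega\cup\Gamma_0$ via Stone--Weierstrass, construct the measure there, and extend it by $+\infty$ on $\Gamma\setminus\Gamma_0$; none of this appears in your proposal. Two further steps are also asserted rather than proved: that the local part of $a$ agrees with $\int_\Omega\nabla u\nabla v\,dx$ for core functions that do \emph{not} vanish near $\Gamma$ (knowing $a$ on $C_c^\infty(\Omega)$ alone does not give this; the paper defers the analogous step to \cite[Corollary 3.4.23]{Wa}), and the existence of the sequence $u_n$ in your admissibility estimate. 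In short, your skeleton matches the Arendt--Warma strategy (and your BDL remark matches the paper's Section 4 idea of deriving locality from domination), but the measure-construction core of the theorem is missing.
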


In order to characterize those sandwiched semigroups which come from
bounded measure we have the following Theorem (\cite[Theorem
4.2.]{AW1})

\begin{theorem}

  Let $\Omega$ be a bounded open set of  $\RR^d$. Let $T$ be a symmetric
   $C_0-$semigroup on $L^2(\Omega)$ associated with a positive closed
   form $(a,D(a))$. Then the following assertions are equivalent

  \begin{enumerate}
   \item[(i)] There exist a bounded admissible measure $\mu$ on
   $\Gamma$ such that $a=a_{\mu}$

   \item[(ii)] \begin{enumerate}
   \item[(a)] One has  $e^{t\Delta_D}\leq T(t)\leq e^{t\Delta_N}$
   , $(t\geq0)$.

     \item[(b)] $a$ is local.

     \item[(c)] $1\in D(a)$.
     \end{enumerate}
 \end{enumerate}
\end{theorem}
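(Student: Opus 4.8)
The plan is to deduce this bounded–measure version from the general characterization (the previous theorem) together with a short additional argument showing that condition (c), $1\in D(a)$, is exactly what upgrades an arbitrary admissible measure to a bounded one. So I would first invoke the equivalence $(i)\Leftrightarrow(ii)$ of the preceding theorem: starting from (ii) here — sandwiching, locality, and $1\in D(a)$ — conditions (a) and (b) already give an admissible measure $\mu$ with $a=a_\mu$, provided I can check that (c$'$): $D(a)\cap C_c(\ombar)$ is dense in $(D(a),\|\cdot\|_a)$. On a bounded open set this density is not automatic, but it will follow from the Meyers–Serrin / regularity theory for $H^1(\Omega)$-type forms once one knows $1\in D(a)$; alternatively one can note that the form $a_\mu$ on a bounded $\Omega$ is always regular in the sense needed because $\Gamma$ is compact, so $\Gamma_\mu=\Gamma$ whenever $\mu$ is locally finite, and the cutoff argument of \cite{AW2} applies. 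The cleanest route is probably to first establish, directly from $1\in D(a)$ and $e^{t\Delta_D}\le T(t)\le e^{t\Delta_N}$, that $D(a)$ is an algebra containing constants and that the core property (c$'$) holds, thereby reducing to the earlier theorem.

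Next, having $a=a_\mu$ with $\mu$ admissible, I would show $\mu(\Gamma)<\infty$. The key identity is that the constant function $\mathbf 1$ lies in $D(a_\mu)=\widetilde{H^1}(\Omega)\cap L^2(\Gamma,d\mu)$, and its relatively quasi-continuous representative is the constant $1$, so
\begin{equation*}
 a_\mu(\mathbf 1,\mathbf 1)=\int_\Omega |\nabla 1|^2\,dx+\int_{\Gamma}|\tilde{\mathbf 1}|^2\,d\mu=\mu(\Gamma)<\infty .
\end{equation*}
Hence $\mu$ is a bounded measure. I must be a little careful that $1\in D(a)$ is interpreted with the right representative and that $\Omega$ bounded guarantees $\mathbf 1\in L^2(\Omega)$ and $\mathbf 1\in H^1(\Omega)$ in the first place; boundedness of $\Omega$ is used precisely here.

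For the converse direction, suppose (i): $\mu$ is a bounded admissible measure and $a=a_\mu$. Then (a) is immediate from the domination theorem (the first displayed theorem of the preliminaries, i.e. \cite[Theorem 3.1]{AW1}), and (b), locality of $a_\mu$, is part of the structure of these forms — the gradient term is local and the boundary term $\int_\Gamma \tilde u\tilde v\,d\mu$ vanishes when $u$ or $v$ vanishes on a neighbourhood, so $a_\mu(u,v)=0$ whenever $\operatorname{supp}u\cap\operatorname{supp}v=\emptyset$. Finally $1\in D(a_\mu)$ because $\int_\Gamma 1\,d\mu=\mu(\Gamma)<\infty$ by boundedness and $1\in H^1(\Omega)$ since $\Omega$ is bounded, giving (c).

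The main obstacle I anticipate is the forward implication's use of the earlier theorem: one needs the density/core hypothesis (c$'$) of that theorem, which is not one of the stated hypotheses here, so the real content is to derive it — i.e. to show that on a bounded $\Omega$ the condition $1\in D(a)$ (together with the sandwiching) forces $D(a)\cap C_c(\ombar)$ to be a core. This is where I would expect to spend most of the effort, likely by exploiting that $T(t)$ dominated above by $e^{t\Delta_N}$ forces $D(a)\subset \widetilde{H^1}(\Omega)$ and then using a truncation-plus-mollification argument together with compactness of $\overline\Omega$ to approximate elements of $D(a)$ by functions in $D(a)\cap C_c(\overline\Omega)$ in the form norm. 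Everything else — the two trivial implications and the computation $a(\mathbf 1,\mathbf 1)=\mu(\Gamma)$ — is routine.
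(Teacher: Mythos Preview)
This theorem is not proved in the present paper; it is quoted in the Preliminaries as \cite[Theorem~4.2]{AW1} (Arendt--Warma), with no argument supplied here. So there is no proof in the paper to compare your proposal against.

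For what it is worth, your outline is the natural one and matches the structure of the original Arendt--Warma argument: reduce to the preceding theorem (their Theorem~4.1) and then observe that the condition $1\in D(a)$ is equivalent, once $a=a_\mu$, to $\mu(\Gamma)=a_\mu(\mathbf 1,\mathbf 1)<\infty$. You have correctly located the only nontrivial step in the forward direction, namely deriving the core property $D(a)\cap C_c(\ombar)$ dense in $(D(a),\|\cdot\|_a)$ from $1\in D(a)$ on a bounded $\Omega$; that is indeed where the work in \cite{AW1} goes, and your truncation-plus-mollification sketch is the right shape. The converse and the computation $a_\mu(\mathbf 1,\mathbf 1)=\mu(\Gamma)$ are routine, as you say.
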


To end this section we give the following fascinating Beurling-Deny
and Lejan formula. The proof can be found for example in \cite{FOT}
or \cite{BD}, another proof based on contraction operators can be
found in \cite{Al} or \cite{An}.

\begin{theorem}(The Beurling-Deny and Lejan formula)
Any regular Dirichlet form $(a,D(a))$ on $L^2(X;m)$ can be expressed
for $u,v\in D(a)\cap C_c(X)$ as follow:
\begin{multline*}
     a(u,v)=a^{(c)}(u,v)+\int_Xu(x)v(x)k(dx)\\
     +\int_{X\times X\setminus_d}(u(x)-u(y))(v(x)-v(y))J(dx,dy)
    \end{multline*}
where $a^{(c)}$ is a strongly local symmetric form with domain
$D(a^{(c)})=D(a)\cap C_c(X)$, $J$ is a symmetric positive Radon
measure on $X\times X\setminus_d$ ($d=$diagonal), and $k$ is a
positive Radon measure on $X$. In addition such $a^{(c)},J$ and $k$
are uniquely determined by $a$.
\end{theorem}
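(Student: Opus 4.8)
\emph{Strategy.} The plan is to extract the three ingredients of the decomposition successively --- first the jumping measure $J$, then the killing measure $k$, and finally the strongly local remainder $a^{(c)}$ --- and only afterwards to verify uniqueness. The one structural input used throughout is that $a$ is Dirichlet, which I exploit in two equivalent guises: every normal contraction operates on $a$, and --- a consequence --- $a(f,g)\le 0$ whenever $f,g\in D(a)\cap C_c(X)$ are nonnegative with disjoint supports. I would prove the latter by applying the normal contraction $t\mapsto|t|$ to $f-g$: since $|f-g|=f+g$ pointwise when $f,g\ge 0$ have disjoint supports, $a(f+g)\le a(f-g)$, which on expanding gives $4a(f,g)\le 0$.

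\emph{Step 1: the jumping measure.} For nonnegative $f\in D(a)\cap C_c(U)$ and $g\in D(a)\cap C_c(V)$ with $\overline U\cap\overline V=\varnothing$, set $\Phi(f\otimes g):=-a(f,g)\ge 0$. One checks these ``rectangle'' values are consistent as $U,V$ vary and, using the regularity of $a$ to produce sufficiently many such $f,g$ together with a Stone--Weierstrass / Daniell--Stone argument, that $\Phi$ extends to a positive linear functional on $C_c(X\times X\setminus d)$; the Riesz representation theorem then yields a positive Radon measure $J$ there, symmetric because $a$ is. Next I would verify that $-a(u,v)=\int_{X\times X\setminus d}(u(x)-u(y))(v(x)-v(y))\,J(dx,dy)$ whenever $\mathrm{supp}\,u\cap\mathrm{supp}\,v=\varnothing$ (the diagonal cross terms $u(x)v(x)$ cancel), and then \emph{define} $a^{(j)}(u,v)$ by the same integral for arbitrary $u,v\in D(a)\cap C_c(X)$. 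Showing this integral converges --- equivalently $\int(u(x)-u(y))^2\,J(dx,dy)<\infty$ --- is a genuine point: it must be squeezed out of $a(u,u)<\infty$ by a monotone approximation in which one exhausts $X$ by relatively compact sets and applies the sign inequality on the complements of shrinking neighbourhoods of $\mathrm{supp}\,u$.

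\emph{Step 2: killing measure and strong locality.} Put $a':=a-a^{(j)}$; by construction $a'(u,v)=0$ whenever $u,v\in D(a)\cap C_c(X)$ have disjoint supports, so $a'$ has ``no distant interaction''. To split off the pointwise part I would introduce, for bounded $u\in D(a')\cap C_c(X)$, the energy (carr\'e-du-champ) measure $\mu_{\langle u\rangle}$ given by $\int\varphi\,d\mu_{\langle u\rangle}:=2a'(u\varphi,u)-a'(u^2,\varphi)$ for $\varphi\in D(a)\cap C_c(X)$; the Markov property makes $\varphi\mapsto\int\varphi\,d\mu_{\langle u\rangle}$ positive, hence a positive Radon measure by Riesz. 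The defect $2a'(u,u)-\mu_{\langle u\rangle}(X)$ turns out to be nonnegative and to depend on $u$ only through $u^2$ pointwise, producing a positive Radon measure $k$ on $X$ with $2a'(u,u)-\mu_{\langle u\rangle}(X)=\int u^2\,dk$; one then sets $a^{(c)}:=a'-\int(\cdot)\,dk$. Strong locality of $a^{(c)}$ --- vanishing whenever $v$ is constant on a neighbourhood of $\mathrm{supp}\,u$ --- follows since, $J$ having been removed, $\mu_{\langle\cdot\rangle}$ carries no jump component and the constancy of $v$ annihilates what is left. Uniqueness is then immediate: $J$ is recovered from $a$ as $-a(u,v)$ on disjoint supports, $k$ as the pointwise diagonal defect of $a-a^{(j)}$, and $a^{(c)}$ as the remainder, so the triple $(a^{(c)},J,k)$ is determined by $a$.

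\emph{Main obstacle.} The algebra above is routine; the real work is in the measure-theoretic upgrades. One must show the candidate functionals --- the rectangle functional $-a(f,g)$ on disjoint supports, the energy-measure functional $\varphi\mapsto 2a'(u\varphi,u)-a'(u^2,\varphi)$, and the killing functional --- are not merely finitely additive set functions but positive, $\sigma$-additive, locally finite Radon measures. Positivity has to be extracted from the Markov property by testing against carefully chosen normal contractions; countable additivity and tightness lean on the regularity hypothesis to furnish enough test functions, together with a monotone-class / Stone--Weierstrass passage from tensor products $f\otimes g$ to general functions on $X\times X$ in order to build $J$. Among these, establishing $\int_{X\times X\setminus d}(u(x)-u(y))^2\,J(dx,dy)<\infty$ for every $u\in D(a)\cap C_c(X)$ is the subtlest, and I expect it to be the main technical hurdle.
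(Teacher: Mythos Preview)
The paper does not supply its own proof of this theorem: immediately before the statement it says ``The proof can be found for example in \cite{FOT} or \cite{BD}, another proof based on contraction operators can be found in \cite{Al} or \cite{An},'' and then uses the formula as a black box in Section~4. There is therefore no in-paper argument to compare your proposal against.

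That said, your outline is essentially the standard construction as in Fukushima--Oshima--Takeda: deduce $a(f,g)\le 0$ for nonnegative $f,g$ with disjoint supports from the Markov property, feed the resulting positive bilinear functional into a Riesz representation to produce $J$, peel off the killing measure $k$ via the energy (carr\'e-du-champ) measures, and identify the residue as strongly local. The technical points you flag as the real obstacles --- finiteness of $\int (u(x)-u(y))^2\,J(dx,dy)$ for $u\in D(a)\cap C_c(X)$, and the passage from finitely-additive positive functionals to honest Radon measures --- are precisely where the effort goes in the cited references, so your assessment of the difficulty is accurate. The alternative route the paper alludes to (Allain \cite{Al}, Andersson \cite{An}) proceeds instead through a systematic study of which contraction operators act on $a$, but arrives at the same decomposition.
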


The Beurling-Deny and Lejan formula had catched a lot of attention
last years and many generalization in other directions was explored.
For example in the case of semi-regular Dirichlet forms, or Regular
but non-symmetric Dirichlet forms. For more information we refer to
\cite{HM}, \cite{HMS1}, \cite{HMS2}  and \cite{MR} and references
therein. Here, we will use the above ``conventional'' Beurling-Deny
and Lejan formula that is the one for Regular Dirichlet forms, but
there is no reason that the same arguments do not work also for the
other cases.

\section{About an example in \cite{AW2}}

Let $\Omega\subset\RR^d$ be a bounded open set with Lipschitz
boundary $\Gamma:=\po$, $\sigma$ is the $(d-1)-$ dimensional
Hausdorff measure on $\Gamma$ and $B$ a bounded operator on
$L^2(\Gamma)$. Define the bilinear form $a_B$ with domain $\Hun$ on
$L^2(\Omega)$ by
\[
 a_B(u,v)=\int_{\Omega}\nabla u\nabla v dx+\int_{\Gamma}Bu_{|\Gamma}v_{|\Gamma}d\sigma
\]

The choice of the domain $D(a_B)=\Hun$ comes from the fact that $B$
is bounded in $L^2(\Gamma)$ and from the continuity of the embedding
$H^1(\Omega)\hookrightarrow L^2(\Gamma)$.

It is clear that the billinear form $(a_B,\Hun)$ is continuous and
elliptic. Let $-\Delta_B$ be the associated operator of the form
$a_B$, and we note $(e^{t\Delta_B})_{t\geq 0}$ the associated
semigroup. It is natural to expect that $-\Delta_B$ is a realization
of the Laplacian in $L^2(\Omega)$, which means that if we let
$u,f\in L^2(\Omega)$, then $u\in D(\Delta_B)$ and $-\Delta_B u=f$ if
and only if $u\in\Hun$, $-\Delta u=f$ and $\partial_{\nu}u+Bu=0$ on
$\Gamma$. The operator $-\Delta_B$ is then called the Laplacian with
nonlocal Robin boundary conditions. Finally, by \cite[C-II, Theorem
1.11. p:255]{Na} one can deduce easily that $e^{t\Delta_B}\geq 0$
for all $t\geq 0$ if and only if there exists a constant $c$ such
that $B-cI\leq 0$ on $L^2(\Gamma)$\cite{Ar}($I$ stands for the
identity operator on $L^2(\Gamma)$).

Now, we consider the case where $d=1$. Let $\Omega=(0,1)$ then
$\Gamma=\{0,1\}$ and $B=(b_{ij})_{i,j=1,2}\in\mathbb R^{2\times 2}$.
In this simple situation, one can see easily when the semigroup
$(e^{t\Delta_B})_{t\geq 0}$ is positive or not. In fact, using the
above characterization, one can conclude that $e^{t\Delta_B}\geq 0$
for all $t\geq 0$ if and only if $b_{ij} \leq 0$ for $i\neq j$, it
suffice to choose $c=b_{11}\vee b_{22}$(see also \cite[C-II, Example
1.13]{Na} for similar situations).

The Example 4.5 of \cite{AW2} corresponds to the choice
\[B=\begin{pmatrix} 1&1\\1&1\end{pmatrix}\] and gives,
\[
 a(u,v)=\int_0^1u'v'dx+u(0)v(0)+u(1)v(0)+u(0)v(1)+u(1)v(1)
\]fot all $u,v\in H^1(0,1)$. This example is used in \cite{AW1}
to show that the locality condition in Theorem 4.1 and Corollary 4.2
can not be omitted. The authors used Ouhabaz's domination criterion
to prove that the semigroup $e^{t\Delta_B}$ is sandwiched between
Dirichlet semigroup and Neumann semigroup, but $e^{t\Delta_B}$ is
obviously not positive, the situation where Ouhabaz's domination
criterion can not be used. In fact one can prove that any positive
semigroup dominated by Neumann semigroup is automatically local.

Consider then the situation of Theorem 4.1 in \cite{AW1}. Let $T$ be
a symmetric $C_0-$semigroup on $L^2(\Omega)$ associated with a
positive closed form $(a, D(a))$ such that
\[
 0\leq T(t)\leq e^{t\Delta^N},\quad (t\geq 0)
\]

Thus, we have, simultaneously, for $u\in D(a)$ that $a(u^+,u^-)\leq
0$ and $a(u^+,u^-)\geq \int_{\Omega}\nabla u^+\nabla u^- dx=0$,
which means that $a(u^+,u^-)=0$ for all $u\in D(a)$ and then that
$a$ is local\cite{Ar}. One can take also, instead of Neumann
semigroup, any symmetric $C_0-$ semigroup associated with a positive
closed local form $(b,D(b)$.

\begin{remark}
We established above that the semigroup in \cite[Example 4.5]{AW1}
can not be positive. This fact suggested to explore wether the
semigroup is or is not eventual positive. The answer is affirmative,
see \cite[Example 6.12]{DGK}.
\end{remark}

\section{The sandwiched property using bdl formula}

In this section we will prove that one can drop the assumption of
locality in results about sandwiched property. This can be made, for
example, by using the Beurling-Deny and Lejan decomposition formula
of regular Dirichlet forms. One can then obtain the following
result:

\begin{theorem}\label{thm:1}
 Let $\Omega$ be a bounded open set of  $\RR^d$ and $(T(t))_{t\geq 0}$ be a $C_0-$semigroup
 on $L^2(\Omega)$ associated with a regular Dirichlet form $(a,D(a))$. Then the following assertions are equivalent:
\begin{enumerate}

 \item  $a=a_{\mu}$ for a unique Radon measure $\mu$ on $\partial\Omega$ which charges no set of zero relative capacity.

\item  $e^{t\Delta_D}\leq T(t)\leq e^{t\Delta_N}$ for all $t\geq 0$.
\end{enumerate}
\end{theorem}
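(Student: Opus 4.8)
The plan is to prove $(2)\Rightarrow(1)$, since $(1)\Rightarrow(2)$ is exactly Theorem 2.3 (the domination result for admissible measures, noting that a Radon measure charging no set of zero relative capacity is admissible with $\Gamma_\mu=\Gamma$). So assume $e^{t\Delta_D}\leq T(t)\leq e^{t\Delta_N}$. The first reduction is to show that $T(t)$ is positive: by the argument already given in Section~3, a symmetric semigroup dominated by the (positive) Neumann semigroup $e^{t\Delta_N}$ is automatically positive, and then for $u\in D(a)$ one has simultaneously $a(u^+,u^-)\leq 0$ (from $T(t)\geq 0$, via Ouhabaz's positivity criterion) and $a(u^+,u^-)\geq \int_\Omega \nabla u^+\nabla u^-\,dx = 0$ (from $T(t)\leq e^{t\Delta_N}$). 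Hence $a(u^+,u^-)=0$ for all $u\in D(a)$; this is enough to conclude $a$ is a Dirichlet form. The point of the theorem, however, is that we do \emph{not} assume locality a priori, so we must \emph{derive} it, and this is where the Beurling--Deny--Lejan formula enters.

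The core of the argument: since $(a,D(a))$ is a regular Dirichlet form, Theorem~2.4 gives the decomposition, for $u,v\in D(a)\cap C_c(\ombar)$,
\[
 a(u,v)=a^{(c)}(u,v)+\int_{\ombar}u v\,k(dx)+\int_{\ombar\times\ombar\setminus_d}(u(x)-u(y))(v(x)-v(y))\,J(dx,dy),
\]
with $a^{(c)}$ strongly local, $J$ a symmetric positive Radon measure off the diagonal, and $k$ a positive Radon measure, all uniquely determined. I would now use the two-sided sandwich to pin down each of the three pieces. Testing against $u,v\in C_c^\infty(\Omega)\subset H^1_0(\Omega)=D(\overline a_\infty)$: for such functions the domination $e^{t\Delta_D}\leq T(t)$ forces $a(u,v)=\int_\Omega\nabla u\nabla v\,dx$ on $C_c^\infty(\Omega)$ (this is the standard consequence of domination by a sub-form acting the same way — Ouhabaz gives $D(\overline a_\infty)\subset D(a)$ and that the forms agree there). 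Feeding functions supported in $\Omega$ into the BDL decomposition then shows that the jump measure $J$ must be supported on $\Gamma\times\Gamma$ minus the diagonal part touching the interior: more precisely, $J$ cannot charge any set of the form $A\times B$ with $A\subset\Omega$ open, because a jump term between interior points is incompatible with the form being the Dirichlet integral on $C_c^\infty(\Omega)$; similarly $k\lfloor_\Omega = 0$ and $a^{(c)}$ restricted to $\Omega$ is the Dirichlet integral. The remaining task is to handle jumps with at least one endpoint on $\Gamma$. Here the upper bound $T(t)\leq e^{t\Delta_N}$ is used: $e^{t\Delta_N}$ comes from the local form $a_0$, and domination by a local form kills all off-diagonal jump contributions — concretely, pick $u,v\in D(a)\cap C_c(\ombar)$ with disjoint supports; then $a_0(u,v)=0$, while domination by $e^{t\Delta_N}$ via Ouhabaz forces $a(u,v)\geq 0$ and by symmetry/sign considerations (replacing $v$ by $-v$, using positivity of $u,v\geq 0$) forces $a(u,v)=0$; but for disjointly supported $u,v$ the continuous and killing parts vanish and only the jump term $-2\int u(x)v(y)\,J(dx,dy)$ survives, so $J\equiv 0$ on $\ombar\times\ombar\setminus_d$. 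Hence $a=a^{(c)}$ on $D(a)\cap C_c(\ombar)$, i.e. $a$ is strongly local — in particular local — on a dense subset, and by regularity on all of $D(a)$.

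Once locality is established, we are exactly in the hypotheses of Theorem~2.5: $a$ is a positive closed form, $e^{t\Delta_D}\leq T(t)\leq e^{t\Delta_N}$, $a$ is local, and $D(a)\cap C_c(\ombar)$ is dense in $(D(a),\|\cdot\|_a)$ because $a$ is regular (this is what regularity buys us, and it is the reason we imposed it — otherwise one works on $X=\ombar\setminus\Gamma_\infty$ as in \cite{AW2}). Theorem~2.5 then yields an admissible measure $\mu$ with $a=a_\mu$; regularity of $a$ forces $\Gamma_\mu=\Gamma$, i.e. $\mu$ is Radon on $\Gamma$, and admissibility says $\mu$ charges no set of zero relative capacity. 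Uniqueness of $\mu$ follows from the uniqueness statement in \cite{AW2} (which is precisely why regularity is assumed), or alternatively from the uniqueness of $k$ in the BDL decomposition: tracking through the identification, $\mu$ is the boundary killing measure $k$, which Theorem~2.4 asserts is unique. This completes $(2)\Rightarrow(1)$.

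\medskip

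The main obstacle I expect is the middle step — rigorously extracting from the two-sided domination that $J=0$ and $k\lfloor_\Omega=0$. Ouhabaz's criterion gives domination as an inclusion of form domains plus an inequality, not directly an identity of the off-diagonal pieces, so one must be careful translating ``$T(t)\leq e^{t\Delta_N}$'' into a usable statement about $a(u,v)$ for disjointly supported $u,v$; the cleanest route is probably to use that domination by a \emph{local} form, combined with positivity, forces $a(u,v)=0$ whenever $u\wedge v=0$ (a known characterization, cf. \cite{Ar}), and then invoke that a Dirichlet form with this property has vanishing jumping measure in its BDL decomposition. Handling the endpoints on $\Gamma$ (where functions in $C_c(\ombar)$ need not vanish) versus the interior requires keeping the bookkeeping of supports straight, but no new idea beyond choosing test functions carefully.
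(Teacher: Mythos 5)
Your proposal is essentially the paper's own proof: you decompose the regular Dirichlet form by the Beurling--Deny--Lejan formula, use Ouhabaz's domination inequality $a_N(u,v)\le a(u,v)$ for nonnegative, disjointly supported $u,v\in D(a)\cap C_c(\overline{\Omega})$ to force the jumping measure $J$ to vanish off the diagonal, use the sandwich on $C_c^\infty(\Omega)$ to push the killing measure $k$ to the boundary and set $\mu=k_{|\partial\Omega}$, and then conclude via the known local-case characterization (you cite \cite{AW1}, the paper defers to \cite{Wa}), the converse being the standard domination theorem as in the paper. Two of your justifications need a one-line repair, though neither changes the substance: positivity of $T(t)$ does \emph{not} follow from domination by the Neumann semigroup (the example discussed in Section 3 is dominated yet not positive; here positivity comes from the hypothesis that $(a,D(a))$ is a Dirichlet form), and you cannot ``replace $v$ by $-v$'' in Ouhabaz's inequality, which holds only for nonnegative $u,v$ --- instead combine $a(u,v)\ge 0$ with the sign of the surviving jump term $-2\int u(x)v(y)\,J(dx,dy)\le 0$, exactly as the paper does.
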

\begin{proof}

Let $(a,D(a)$ be the regular Dirichlet form associated with
$(T(t))_{t\geq 0}$. From the formula of Beurling-Deny and Lejan we
get for all $u,v\in D(a)\cap C_c(\overline{\Omega})$
\begin{multline*}
     a(u,v)=a^{(c)}(u,v)+\int_{\overline{\Omega}}u(x)v(x)k(dx)\\
     +\int_{\overline{\Omega}\times \overline{\Omega}\setminus_d}(u(x)-u(y))(v(x)-v(y))J(dx,dy)
    \end{multline*}
where $a^{(c)}$ is a strongly local form, $k$ a positive radon
measure on $\overline{\Omega}$ and $J$ a symmetric positive radon
measure on $\overline{\Omega}\times\overline{\Omega}\setminus_d$. To
simplify calculations, we note for all $u,v\in D(a)\cap
C_c(\overline{\Omega})$
\[
 a_k(u,v)=a^{(c)}(u,v)+\int_{\overline{\Omega}} u(x)v(x)k(dx),
\]and
\[
 b(u,v)=a(u,v)-a_k(u,v),
\]
We have then for all $u,v\in D(a)\cap C_c(\overline{\Omega}) $ such
that $\mathrm{supp}[u]\cap\mathrm{supp}[v]=\emptyset$
\begin{eqnarray}
 b(u,v) &=&  \int_{\overline{\Omega}\times \overline{\Omega}\setminus_d}(u(x)-u(y))(v(x)-v(y))J(dx,dy)     \nonumber \\[0,2cm]
   &=& 2\int_{\overline{\Omega}}u(x)v(x)J(dx,dy)-2\int_{\overline{\Omega}\times\overline{\Omega}\setminus_d}u(x)v(y)J(dx,dy)\\[0,2cm]
   &=&-2\int_{\overline{\Omega}\times\overline{\Omega}\setminus_d}u(x)v(y)J(dx,dy)
\end{eqnarray}
From the fact that $a_k$ is local, we get for all $u,v\in D(a)\cap
C_c(\overline{\Omega})$ such that
$\mathrm{supp}[u]\cap\mathrm{supp}[v]=\emptyset$
\[
 a(u,v)=-2\int_{\overline{\Omega}\times\overline{\Omega}\setminus_d}u(x)v(y)J(dx,dy)
\]
From \eqref{eq:san}, and the Ouhabaz's domination
criterion\cite[Th\'eor\`eme 3.1.7.]{Ou}, we have that $a_N(u,v)\leq
a(u,v)$ for all $u,v\in D(a)_+$, and then for all $u,v\in D(a)_+\cap
C_c(\overline{\Omega})$ such that
$\mathrm{supp}[u]\cap\mathrm{supp}[v]=\emptyset$ we have
\[
 0\leq -2\int_{\overline{\Omega}\times\overline{\Omega}\setminus_d}u(x)v(y)J(dx,dy),
\]
which means that
$-2\int_{\overline{\Omega}\times\overline{\Omega}\setminus_d}u(x)v(y)J(dx,dy)=0$.
Thus $\mathrm{supp}[J]\subset d$ and then for all $u,v\in D(a)\cap
C_c(\overline{\Omega})$
\[
\int_{\overline{\Omega}\times\overline{\Omega}\setminus_d}u(x)v(y)J(dx,dy)=\int_{\overline{\Omega}}u(x)v(x)J(dx,dy)
\]
Then, we deduce that $b(u,v)=0$ for all $u,v\in D(a)\cap
C_c(\overline{\Omega})$ and thus the form $a$ is immediately local
and is reduced to the following
\[
 a(u,v)=a^{(c)}(u,v)+\int_{\overline{\Omega}} u(x)v(x)k(dx),\text{ for all }u,v\in D(a)\cap C_c(\overline{\Omega})
\]
We have $C_c^{\infty}(\Omega)\subset D(a)$, and then for all $u,v\in
C_c^{\infty}(\Omega)$ we have from \eqref{eq:san}
\[
 \int_{\Omega}\nabla u\nabla v dx\leq a^{(c)}(u,v)+\int_{\Omega} u(x)v(x)k(dx)\leq  \int_{\Omega}\nabla u\nabla v dx
\]
It follows that for all $u,v\in C_c^{\infty}(\Omega)$, we get
\begin{eqnarray}
 a(u,v) &=& a^{(c)}(u,v)+\int_{\Omega} u(x) v(x) k(dx) \\[0,2cm]
   &=& \int_{\Omega}\nabla u\nabla v dx
\end{eqnarray}
Thus $\mathrm{supp}[k]\subset\partial\Omega$ and then by putting
$\mu=k_{|\partial\Omega}$, we have for all  $u,v\in D(a)\cap
C_c(\overline{\Omega})$
\[
 a(u,v)=\int_{\Omega}\nabla u\nabla v dx+\int_{\partial\Omega}u(x)v(x)\mu(dx)
\]
For the rest of the proof, one can follow exactly the end of the
proof of \cite[Corollary 3.4.23]{Wa}
\end{proof}
One can see easily that the locality property was automatic in the
above proof and that just the right hand side domination property
was needed for it. We give then the following Corollary.
\begin{corollary}\label{cor:1}
  Let $\Omega$ be a bounded open set of  $\RR^d$ and $(T(t))_{t\geq 0}$ be a $C_0-$semigroup
  on $L^2(\Omega)$ associated with a regular Dirichlet form $(a,D(a))$ such that
  $0\leq T(t)\leq e^{t\Delta_N}$ for all $t\geq 0$. Then the regular Dirichlet form $(a,D(a))$ is local.
\end{corollary}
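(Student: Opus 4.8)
The plan is to extract the relevant portion of the proof of Theorem~\ref{thm:1}, since the corollary only asserts locality of $(a,D(a))$, and locality was obtained in that proof using only the \emph{upper} domination $T(t)\le e^{t\Delta_N}$. Concretely, I would first invoke the Beurling--Deny and Lejan formula for the regular Dirichlet form $(a,D(a))$ to write, for $u,v\in D(a)\cap C_c(\overline{\Omega})$,
\[
 a(u,v)=a^{(c)}(u,v)+\int_{\overline{\Omega}}u(x)v(x)\,k(dx)+\int_{\overline{\Omega}\times\overline{\Omega}\setminus_d}(u(x)-u(y))(v(x)-v(y))\,J(dx,dy),
\]
and set $a_k=a^{(c)}+\int u v\,dk$ (which is local) and $b=a-a_k$, the jump part. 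The goal is to show $b\equiv 0$ on $D(a)\cap C_c(\overline{\Omega})$, equivalently $\operatorname{supp}[J]\subset d$.

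Next I would restrict to $u,v\in D(a)_+\cap C_c(\overline{\Omega})$ with disjoint supports, so that the local part contributes nothing: $a_k(u,v)=0$ and hence $a(u,v)=b(u,v)=-2\int_{\overline{\Omega}\times\overline{\Omega}\setminus_d}u(x)v(y)\,J(dx,dy)\le 0$. On the other hand, since $0\le T(t)$ and $T(t)\le e^{t\Delta_N}$, the Ouhabaz domination criterion gives $a_N(u,v)\le a(u,v)$ for all $u,v\in D(a)_+$; for disjointly supported $u,v$ one has $a_N(u,v)=\int_\Omega\nabla u\nabla v\,dx=0$, so $a(u,v)\ge 0$. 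Combining, $\int u(x)v(y)\,J(dx,dy)=0$ for all such $u,v$; a standard approximation argument with nonnegative continuous functions localizing near two distinct points then forces $J$ to be concentrated on the diagonal. Once $\operatorname{supp}[J]\subset d$, the jump integral collapses: $\int_{\overline{\Omega}\times\overline{\Omega}\setminus_d}(u(x)-u(y))(v(x)-v(y))\,J(dx,dy)=0$ identically, so $b=0$ and $a=a_k=a^{(c)}+\int u v\,dk$, which is a local form.

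The only real subtlety is the passage from ``$\int u(x)v(y)\,J(dx,dy)=0$ for all nonnegative $u,v\in D(a)\cap C_c(\overline{\Omega})$ with disjoint supports'' to ``$J$ lives on the diagonal.'' Here one needs enough such test functions to separate points of $\overline{\Omega}\times\overline{\Omega}\setminus_d$: given $(x_0,y_0)$ with $x_0\ne y_0$, pick disjoint neighborhoods and bump functions in $D(a)\cap C_c(\overline{\Omega})$ supported in them (which exist because $(a,D(a))$ is regular, so $D(a)\cap C_c(\overline{\Omega})$ is dense and, being a core, is rich enough; in the classical situation $C_c^\infty(\Omega)$ and traces of smooth functions suffice). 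This is exactly the step carried out in the proof of Theorem~\ref{thm:1}, so I would simply cite that argument. Hence the corollary follows directly: the upper bound $T(t)\le e^{t\Delta_N}$ together with positivity of $T(t)$ already yields locality of $(a,D(a))$, with no use of the lower Dirichlet bound.

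\begin{proof}
This is exactly the first part of the proof of Theorem~\ref{thm:1}. Writing the Beurling--Deny and Lejan decomposition $a=a_k+b$ of the regular Dirichlet form $(a,D(a))$ with $a_k$ local and $b$ the jump part governed by the symmetric positive Radon measure $J$ on $\overline{\Omega}\times\overline{\Omega}\setminus_d$, one has for $u,v\in D(a)\cap C_c(\overline{\Omega})$ with $\operatorname{supp}[u]\cap\operatorname{supp}[v]=\emptyset$ that $a(u,v)=-2\int_{\overline{\Omega}\times\overline{\Omega}\setminus_d}u(x)v(y)\,J(dx,dy)$. Since $0\le T(t)\le e^{t\Delta_N}$, the Ouhabaz domination criterion \cite[Th\'eor\`eme 3.1.7.]{Ou} gives $a_N(u,v)\le a(u,v)$ for all $u,v\in D(a)_+$; for disjointly supported $u,v$ we have $a_N(u,v)=\int_\Omega\nabla u\nabla v\,dx=0$, hence $0\le -2\int u(x)v(y)\,J(dx,dy)$, while positivity of $T(t)$ forces $a(u^+,u^-)\le 0$ and thus the reverse inequality; therefore $\int u(x)v(y)\,J(dx,dy)=0$ for all such $u,v$. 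As in the proof of Theorem~\ref{thm:1}, this implies $\operatorname{supp}[J]\subset d$, so the jump integral vanishes identically on $D(a)\cap C_c(\overline{\Omega})$ and $a=a_k$ is local there; since $D(a)\cap C_c(\overline{\Omega})$ is a core for $(a,D(a))$, the form $(a,D(a))$ is local.
\end{proof}
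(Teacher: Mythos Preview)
Your proposal is correct and follows precisely the approach of the paper: the paper's own proof of this corollary is the one-line statement that it ``comes directly from the proof of Theorem~\ref{thm:1},'' and you have simply (and accurately) extracted the relevant locality portion of that argument. Your invocation of positivity of $T(t)$ for the reverse inequality is valid but in fact unnecessary, since positivity of $J$ and of $u,v$ already give $\int u(x)v(y)\,J(dx,dy)\ge 0$ directly, as in the paper.
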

\begin{proof}
 The corollary come directly from the proof of Theorem \ref{thm:1}.
\end{proof}
We deduce from the above discussion the following general theorem
\begin{theorem}
 Let $X$ be locally compact separable metric space and $m$ a positive Radon measure on $X$ such
 that $\mathrm{supp}[m]=X$. Let $(T(t))_{t\geq 0}$ (resp. $(S(t))_{t\geq 0}$) be a $C_0-$semigroup
 on $L^2(X;m)$ associated with a regular Dirichlet form $(a,D(a))$ (resp. with a Dirichlet form $(b,D(b))$). Assume
 in addition that $T(t)$ is subordinated by $S(t)$ that is $T(t)\leq S(t)$ for all $t\geq 0$ in the positive
 operators sense. Then $b$ is local implies $a$ is local.
\end{theorem}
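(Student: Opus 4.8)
The plan is to mimic the argument of Theorem~\ref{thm:1}, keeping only those ingredients that actually use the domination hypothesis. First I would apply the Beurling--Deny and Lejan formula to the regular Dirichlet form $(a,D(a))$ on $L^2(X;m)$, obtaining for all $u,v\in D(a)\cap C_c(X)$ the decomposition
\[
 a(u,v)=a^{(c)}(u,v)+\int_X u(x)v(x)\,k(dx)+\int_{X\times X\setminus_d}(u(x)-u(y))(v(x)-v(y))\,J(dx,dy),
\]
with $a^{(c)}$ strongly local, $k$ a positive Radon measure on $X$, and $J$ a symmetric positive Radon measure off the diagonal. Writing $a_k(u,v)=a^{(c)}(u,v)+\int_X uv\,dk$ and $b_a(u,v)=a(u,v)-a_k(u,v)$, the same elementary computation as in the proof of Theorem~\ref{thm:1} gives, for $u,v\in D(a)\cap C_c(X)$ with disjoint supports,
\[
 a(u,v)=b_a(u,v)=-2\int_{X\times X\setminus_d}u(x)v(y)\,J(dx,dy),
\]
since $a_k$, being the sum of a strongly local form and a multiplication form, vanishes on functions with disjoint supports.

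Next I would feed in the domination $T(t)\le S(t)$. By the Ouhabaz domination criterion (\cite[Th\'eor\`eme 3.1.7.]{Ou}), this is equivalent to a form inequality: $D(a)_+\subset D(b)_+$ (after the appropriate lattice-ideal condition, which holds here because both are Dirichlet forms) and $b(u,v)\le a(u,v)$ for all $u,v\in D(a)_+$ — more precisely one needs the version of the criterion phrased so that, for $0\le u,v$, one has $b(u,v)\le a(u,v)$. Restricting to $u,v\in D(a)_+\cap C_c(X)$ with disjoint supports and using that $b$ is local, so $b(u,v)=0$, we obtain
\[
 0\le -2\int_{X\times X\setminus_d}u(x)v(y)\,J(dx,dy).
\]
Because the integrand $u(x)v(y)$ is nonnegative and $J$ is a positive measure, the integral is also $\le 0$, hence equal to $0$. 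Since this holds for all such $u,v$, and $X$ is locally compact separable metric (so $C_c(X)$ separates points and one can localize), it follows that $\mathrm{supp}[J]\subset d$; but $J$ is defined on $X\times X\setminus_d$, so $J=0$. Therefore $b_a\equiv 0$ on $D(a)\cap C_c(X)$, which gives $a=a_k$ on the core $D(a)\cap C_c(X)$, and as $a^{(c)}$ is strongly local and $\int_X uv\,dk$ is a multiplication form, $a$ is local on this core; by regularity of $(a,D(a))$ and continuity of $a$ the locality extends to all of $D(a)$.

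The main obstacle I expect is making the step ``$\int u(x)v(y)\,J(dx,dy)=0$ for all disjointly supported $u,v\ge 0$ in the core $\Rightarrow J=0$'' fully rigorous: one must argue that the core functions are rich enough to detect any mass of $J$ away from the diagonal. Concretely, if $J(K_1\times K_2)>0$ for some compact $K_1,K_2$ at positive distance, one wants bump functions $u,v$ in $D(a)\cap C_c(X)$ supported near $K_1$ and $K_2$ respectively with $u,v>0$ there; existence of such functions follows from regularity of the Dirichlet form together with a partition-of-unity / Urysohn argument in the locally compact space $X$, but it should be stated carefully. A secondary point to check is the precise form of the Ouhabaz criterion and that the domain-ideal condition it requires is automatically satisfied when $a$ and $b$ are Dirichlet forms with $T(t)\le S(t)$; this is standard (see \cite[Ou]{Ou}) but worth citing. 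Once $J=0$ the conclusion is immediate, exactly as in Corollary~\ref{cor:1}.
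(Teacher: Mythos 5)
Your proposal is correct and takes essentially the same route as the paper's own proof: use the Ouhabaz domination criterion to get $b(u,v)\le a(u,v)$ for positive $u,v$, invoke locality of $b$, and then force the jump measure $J$ in the Beurling--Deny and LeJan decomposition of $a$ to vanish off the diagonal for disjointly supported nonnegative functions in the core. Your additional care in justifying that the core is rich enough (via regularity and Urysohn-type bump functions) to conclude $\mathrm{supp}[J]\subset d$ is a detail the paper leaves implicit, but the argument is the same.
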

\begin{proof}
 The proof is based on the same argumets as for Corollary \ref{cor:1}. In fact, we have that $b\leq a$ and $b$ is
 local then for all $u,v\in D(a)_+\cap C_c(X)$ with disjoint support we have $a(u,v)\geq 0$. Thus, by using
 Beurling-Deny and Lejan formula and the positivity of $u$ and $v$ one obtain
\[
 \int_{X\times X\setminus_d}u(x)v(y)J(dx,dy)=0
\]for all $u,v\in D(a)\cap C_c(X)$ with disjoint support. Thus $\mathrm{supp}[J]\subset d$, and then the jump integral
in the Beurling-Deny and Lejan decomposition of the form $(a,D(a))$ is null, which achieve the proof.

\end{proof}
\begin{corollary}
  Let $\Omega$ be a bounded open set of  $\RR^d$ and $(T(t))_{t\geq 0}$ be a $C_0-$semigroup on $L^2(\Omega)$ associated
  with a Dirichlet form $(a,D(a))$. Then the following assertions are equivalent:
\begin{enumerate}
 \item[(i)] $a=a_{\mu}$ for some positive measure $\mu$ on $\partial\Omega$ which charges no set of zero relative capacity
 on which it is locally finite.
 \item[(ii)] (a) $e^{t\Delta_D}\leq T(t)\leq e^{t\Delta_N}$ for all $t\geq 0$

       (b) $D(a)\cap C_c(\overline{\Omega})$ is dense in $D(a)$.
\end{enumerate}
\end{corollary}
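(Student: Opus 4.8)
The plan is to deduce this from Theorem~\ref{thm:1} by reducing to the regular case; the only new features compared with Theorem~\ref{thm:1} are that $(a,D(a))$ is no longer assumed regular but only to satisfy the density hypothesis (ii)(b), and that the resulting measure $\mu$ may be locally infinite on part of $\po$ (so that, as explained in the introduction, uniqueness of $\mu$ can no longer be expected). The implication $(i)\Rightarrow(ii)$ I would obtain directly from the description of $\overline{a}_\mu$ recalled in Section~2. Writing $\Gamma_\mu$ for the open part of $\po$ on which $\mu$ is locally finite, the restriction $\mu|_{\Gamma_\mu}$ is an admissible Radon measure, so the domination $e^{t\Delta_D}\leq T(t)\leq e^{t\Delta_N}$ is exactly the domination result of Section~2 (proved in \cite[Theorem~3.1]{AW1}); this gives (ii)(a). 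And (ii)(b) is immediate, because the pre-closure form $a_\mu$ has domain contained in $C_c(\ombar)$ by definition, so $D(a)\cap C_c(\ombar)$ contains the core $D(a_\mu)$ of $a=\overline{a}_\mu$ and is therefore dense in $(D(a),\|\cdot\|_a)$.

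For the converse $(ii)\Rightarrow(i)$, the first step is to exploit (ii)(b) to pass to a regular representation of $(a,D(a))$, exactly as is done for non-regular forms in \cite{AW2} and \cite[Chapter~3]{Wa}: since $(a,D(a))$ admits a core of continuous functions on $\ombar$, there is a locally compact space $X$ --- concretely $X=\ombar\setminus\Gamma_\infty$, where $\Gamma_\infty$ is the closed part of $\po$ on which every function of $D(a)$ must vanish relatively quasi-everywhere --- on which $(a,D(a))$ is a \emph{regular} Dirichlet form. Since $\Gamma_\infty$ is Lebesgue-null we have $L^2(X)=L^2(\Omega)$ and the semigroup $(T(t))_{t\geq0}$ is unchanged. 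On this regular form I would then run the argument in the proof of Theorem~\ref{thm:1} verbatim: apply the Beurling--Deny--Lejan decomposition on $X$; use the right-hand domination $T(t)\leq e^{t\Delta_N}$ together with the Ouhabaz criterion to force the jump measure $J$ to be carried by the diagonal, so that $a$ is local (this is Corollary~\ref{cor:1} transported to $X$); and use the left-hand domination $e^{t\Delta_D}\leq T(t)$, tested on $C_c^\infty(\Omega)$, to force the strongly local part of $a$ to be $\int_\Omega\nabla u\nabla v\,dx$ and the killing measure $k$ to be carried by $\po$. Setting $\mu:=k|_{\po\cap X}$ gives a Radon measure on $\Gamma_\mu:=\po\cap X$ which, as the killing measure of a Dirichlet form, charges no set of zero relative capacity; extending it by $+\infty$ on $\Gamma_\infty$ produces a positive Borel measure on $\po$ which charges no set of zero relative capacity on which it is locally finite.

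Finally I would identify $a$ with $\overline{a}_\mu$ for this $\mu$. On the core $D(a)\cap C_c(X)$ the previous step already yields $a(u,v)=\int_\Omega\nabla u\nabla v\,dx+\int_{\Gamma_\mu}u\,v\,d\mu$, while the boundary condition ``$\tilde u=0$ relatively quasi-everywhere on $\po\setminus\Gamma_\mu$'' appearing in the description of $D(\overline{a}_\mu)$ is built into the choice of $\Gamma_\infty$; the passage from the core to all of $D(a)$ and from $\Gamma_\mu$ to the whole boundary is then carried out exactly as at the end of the proof of \cite[Corollary~3.4.23]{Wa} (equivalently \cite{AW2}). I expect the main obstacle to be precisely this regular-representation step together with the extension of $\mu$: one must make rigorous that hypothesis (ii)(b) alone produces a locally compact realisation $X\subset\ombar$ on which $a$ is a regular Dirichlet form with the right boundary set $\Gamma_\infty$, and that assigning $\mu=+\infty$ on $\Gamma_\infty$ returns the original form. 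Everything downstream of that --- the Beurling--Deny--Lejan computation and the comparison with $\overline{a}_\mu$ --- is then a transcription of Theorem~\ref{thm:1} and of the bookkeeping already carried out in \cite{Wa} and \cite{AW2}.
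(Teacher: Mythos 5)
Your argument is correct and is essentially the paper's own proof: both directions rest on passing, via hypothesis (ii)(b) and Stone--Weierstrass, to a regular realisation of $(a,D(a))$ on a locally compact set $X\subset\ombar$ obtained by removing the boundary part where the core vanishes (the paper writes $X=\Omega\cup\Gamma_0$, you write $X=\ombar\setminus\Gamma_\infty$), then running the Beurling--Deny--Lejan argument of Theorem~\ref{thm:1} on $X$ to produce the killing measure carried by $\po\cap X$, and finally delegating the identification with $\overline{a}_\mu$ and the extension of $\mu$ to the whole boundary to the bookkeeping in \cite{AW1} and \cite{Wa}, exactly as the paper does. The only differences are presentational (your explicit treatment of $(i)\Rightarrow(ii)$ and the r.q.e.\ formulation of $\Gamma_\infty$), not a different method.
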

\begin{proof}
Let
\[
 \Gamma_0=\{z\in\Gamma:\exists u\in D(a)\cap C_c(\overline{\Omega}), u(z)\neq 0\}
\]
From the Stone-Weierstrass theorem, one can see that the form
$(a,D(a))$ is regular on $L^2(X)$ where $X=\Omega\cup\Gamma_0$, one
can see it exactly from the proof in \cite[Theorem 4.1]{AW1} or the
one in \cite[Theorem 3.4.21]{Wa}. Following the same procedure as in
Theorem \ref{thm:1}, we obtain that there exist a unique positive
Radon measure $k$ on $X$ such that
\[
 a(u,v)=a^{(c)}(u,v)+\int_Xu(x)v(x)k(x)
\]
for all $u,v\in D(a)\cap C_c(X)$.

Now one can just again follow the same steps as in the proof of
\cite[Theorem 4.1]{AW1} or the one in \cite[Theorem 3.4.21]{Wa}
\end{proof}

\begin{proposition}
 Let $\Omega\subset\RR^d$ be a bounded open set of class $C^1$ with boundary $\Gamma$. Let $T$ be a symmetric $C_0-$semigroup
 associated with a regular Dirichlet form $(a,D(a))$. Denote by $A$ the generator of $T$. Assume that

a) $e^{t\Delta_D}\leq T(t)\leq e^{t\Delta_N}$ for all $t\geq 0$, and
that

b) there exists $u\in D(A)\cap C^2(\overline{\Omega})$ such that
$u(z)>0$ for all $z\in\Gamma$. Then there exists a function
$\beta\in C(\Gamma)_+$ such that $A=-\Delta_{\beta}$.
\end{proposition}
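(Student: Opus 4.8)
\emph{Proof proposal.} The plan is to read off the structural form of $a$ from Theorem~\ref{thm:1} and then to use hypothesis~(b) together with Green's formula on $\Omega$ to compute the boundary measure explicitly.

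First I would apply Theorem~\ref{thm:1}: since $(a,D(a))$ is a regular Dirichlet form on the bounded set $\Omega$ and hypothesis~(a) holds, there is a unique positive Radon measure $\mu$ on $\Gamma$ charging no set of zero relative capacity with $a=a_\mu$. Since $\Gamma$ is compact, $\mu$ is finite, so $\Gamma_\mu=\Gamma$ and the form domain $D(a_\mu)$ contains $C_c^\infty(\Omega)$ and (the restrictions to $\ombar$ of) all functions in $C^\infty(\ombar)$: these lie in $H^1(\Omega)$ and, being bounded on $\Gamma$, lie in $L^2(\Gamma,d\mu)$. So everything reduces to showing $\mu=\beta\,\sigma$ for a function $\beta\in C(\Gamma)_+$, after which the operator associated with $a=a_\mu=a_\beta$ is precisely the Robin Laplacian $-\Delta_\beta$ of Section~3 (the forms $a_{\beta\sigma}$ and $a_\beta$ agreeing on $\Hun$ because $\beta$ is bounded and $\Omega$ is Lipschitz), i.e. $A=-\Delta_\beta$ in the notation of the statement.

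Now I would use~(b). Fix $u\in D(A)\cap C^2(\ombar)$ with $u>0$ on $\Gamma$; since $u\in D(A)$ there is $f\in L^2(\Omega)$ with $a_\mu(u,v)=\int_\Omega f\,v\,dx$ for all $v\in D(a_\mu)$. Testing this against $v\in C_c^\infty(\Omega)$ (whose trace on $\Gamma$ vanishes) gives $f=-\Delta u$ in $L^2(\Omega)$, noting $\Delta u\in C(\ombar)\subset L^2(\Omega)$. Next, for $v\in C^\infty(\ombar)$, Green's first identity on the $C^1$ domain $\Omega$ — where the outward unit normal $\nu$ is defined and continuous on $\Gamma$ and $\sigma$ is the surface measure — gives
\[
\int_\Omega\nabla u\cdot\nabla v\,dx=-\int_\Omega(\Delta u)\,v\,dx+\int_\Gamma(\partial_\nu u)\,v\,d\sigma .
\]
Substituting this into $a_\mu(u,v)=\int_\Omega f\,v\,dx$ and cancelling the term $-\int_\Omega(\Delta u)\,v\,dx$, I obtain
\[
\int_\Gamma(\partial_\nu u)\,v\,d\sigma+\int_\Gamma u\,v\,d\mu=0\qquad\text{for all }v\in C^\infty(\ombar).
\]
Since restrictions to $\Gamma$ of functions in $C^\infty(\ombar)$ are dense in $C(\Gamma)$ (Stone--Weierstrass on the compact set $\ombar$), this identity of continuous linear functionals on $C(\Gamma)$ upgrades to an identity of Radon measures, namely $(\partial_\nu u)\,\sigma=-\,u\,\mu$ on $\Gamma$. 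Because $u$ is continuous and strictly positive on the compact set $\Gamma$, it is bounded below there by a positive constant, so we may divide: $\mu=\beta\,\sigma$ with $\beta:=-(\partial_\nu u)/u|_\Gamma$. Finally $\beta\in C(\Gamma)$, since $z\mapsto\partial_\nu u(z)=\nabla u(z)\cdot\nu(z)$ is continuous on $\Gamma$ ($u\in C^2(\ombar)$, $\nu$ continuous), and $\beta\ge0$, since $\mu\ge0$, $\sigma\ge0$ and $\mathrm{supp}\,\sigma=\Gamma$; hence $\beta\in C(\Gamma)_+$ and $A=-\Delta_\beta$.

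I expect the one point needing real care to be the passage from the scalar identity to the identity of measures: one must check that the test functions available in $D(a_\mu)$ — which, as noted, include all of $C^\infty(\ombar)$ because $\mu$ is finite — have restrictions dense in $C(\Gamma)$. The $C^1$ regularity of $\Gamma$ comfortably guarantees this, and it is also exactly what makes $\nu$, and therefore $\beta$, continuous rather than merely bounded. The remaining ingredients — that $u$ and the chosen test functions lie in $D(a_\mu)$, the identification $f=-\Delta u$, and the identity $a_{\beta\sigma}=a_\beta$ — are routine consequences of the description of $\Delta_\mu$ recalled in Section~2.
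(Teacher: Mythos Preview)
Your proposal is correct and follows exactly the route the paper intends: the paper's own ``proof'' merely points to \cite[Proposition~5.2]{AW1} and observes that the locality hypothesis there is now redundant thanks to Theorem~\ref{thm:1}. Your argument---invoke Theorem~\ref{thm:1} to obtain $a=a_\mu$, test the generator identity against $C_c^\infty(\Omega)$ to get $f=-\Delta u$, then apply Green's first identity with $C^\infty(\ombar)$ test functions to force $u\,\mu=-(\partial_\nu u)\,\sigma$ as measures on $\Gamma$---is precisely the computation carried out in \cite{AW1}, so there is nothing to add.
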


\begin{proof}
 The proof is exactly the same as \cite[Proposition 5.2]{AW1}, we have just omitted the locality hypothesis in the proposition.
\end{proof}

In what follow we give some complements and remarks about our
approach and the one of W. Arendt and M. Warma.
\begin{itemize}
 \item Apparently the both methods, ours and the one in \cite{AW1} seem to be different, but
 in fact there is some connection between them. Seeing things more in details in the proofs
 of \cite[Theorem 4.1]{AW1}, and the proof of the formula of Beurling-Deny and Lejan, one
 can see that they make use of \cite[Lemma 1.4.1]{FOT}(or more exactly its proof) as a central tool to find a measure.

  \item The ideal property was not used in the sufficient implication of Theorem \ref{thm:1}.

  \item One can expect to generalize our results without difficulty to the semi-regular Dirichlet
  forms and to regular but non-symmetric regular Dirichlet forms.

  \item In the context of $p-$Laplacian operator R. Chill and M. Warma proved a version of the
   \cite[Theorem 4.1. and Theorem 4.2]{AW1} see \cite{CW}. Unfortunately, there is no version
   of the formula of Beurling-Deny and Lejan in $L^p(X,m)$, see \cite{Bir} as a beginning of interest.

  \item It is proved in an unpublished note of W. Arendt and M. Warma that a locality of
  the Dirichlet form implies the locality of the associated operator, but the converse is not
  true. We can see easily from our results that the operator is local if and only
  if $J_{|_{\Omega\times\Omega\setminus_d}}=0$, which means that there is an equivalence between
  the locality of the operator and the one of the form when there is no jump inside the domain.
\end{itemize}

\par\bigskip

\section*{Acknowledgments} The author would like to thank Wolfgang Arendt Omar El-Mennaoui
and Jochen Gl\"uck for many stimulating and helpful discussions.
This work was achieved during a research stay in Ulm, Germany.


\begin{thebibliography}{00}

\bibitem{A} K. Akhlil : \textit{Probabilistic Solution of the General Robin
Boundary Value Problem on Arbitrary Domains}, International Journal
of Stochastic Analysis, vol. 2012, 17 pages, 2012.

\bibitem{Al} G. Allian: \textit{Sur la Repr\'esentation des Formes de Dirichlet,} Annales de
l'institut Fourier, {\bf 25} no. 3-4 (1975), p. 1-10.

\bibitem{An} L-E. Andersson:\textit{ On the Representation of Dirichlet Forms,}
Annales de l'institut Fourier,{\bf 25} no. 3-4 (1975), p. 11-25.

\bibitem{AW1} W. Arendt and M. Warma: {\it The Laplacian with Robin Boundary Conditions
on Arbitrary Domains}, Potential Anal. {\bf 19} (2003), 341-363.

\bibitem{AW2}  W. Arendt and M. Warma: Dirichlet and Neumann boudary
conditions: What is in between?. {\em J. evol. equ} {\bf 3} (2003),
119-135.

\bibitem{Ar} W. Arendt: Private Communication.

\bibitem{BD} A. Beurling and J. Deny: \textit{Espace de Dirichlet. I.
Le cas \'elementaire,} Acta Math., {\bf 99} (1958), 203-224.

\bibitem{Bir} M. Biroli: \textit{Strongly Local Nonlinear Dirichlet
Functionals,} Ukranian Math. Bul., Tom1, no4, (2004), 485-500.

\bibitem{CW} R. Chill and M. Warma: \textit{Dirichlet and Neumann
Boundary Conditions for $p-$Laplace Operator: what is in between?,}
proceedings of the Royal Society of Edinburgh, {\bf 142A}, (2012), 975-1002.

\bibitem{DGK} D. Daners, J.Gl\"uck and J.B. Kennedy :\textit{Eventually and
 asymptotically positive semigroups on Banach Lattices}, J. Differential Equations (2016) 2607-2649.

\bibitem{FOT} M. Fukushima, Y. Oshima and M. Takeda: {\it Dirichlet Forms
and Symmetric Markov Processes.} Walter de Gruyter, Berlin, (1994).

\bibitem{HM} Z.-C. Hu and Z.-M. Ma: \textit{Beurling-Deny Formula of
Semi-Dirichlet Forms,} C. R. Acad. Sci., Paris, Ser. I 338 (2004).

\bibitem{HMS1} Z.-C. Hu, Z.-M. Ma and Sun W.: \textit{Extensions of
L\'evy-Khintchine Formula and Beurling-Deny Formula in Semi-Dirichlet
Forms Setting,} J. Func. anal., {\bf 239} (2006), 179-213

\bibitem{HMS2} Z.-C. Hu, Z.-M. Ma and Sun W.: \textit{On Representation
of Non-symmetric Dirichlet Forms,} Potential Anal, {\bf 32}, (2010),
101-131.

\bibitem{MR} Z.M. Ma and M. R\"ockner: \textit{Introduction
to the Theory of (Non-Symmetric) Dirichlet Forms,} Springer-Verlag, Berlin, 1992.

\bibitem{Na} R. Nagel (ed.):{\em One-Parameter Semigroups of Postive
Operators}, Lecture Notes in Math. {\bf 1184}, Springer-Verlag,
Berlin 1986.

\bibitem{Ou} E.M. Ouhabaz: Propri\'et\'es d'ordre et contractivit\'e
des semi-groupes avec application aux operateurs elliptiques. Ph.D
Thesis, Universit\'e de Franche-Compt\'e,Besancon, 1992.

\bibitem{Ru} W. Rudin:{\it Real and Complex Analysis.} McGraw-Hill,
Inc., 1966.

\bibitem{Wa} M. Warma: The Laplacian with General Robin Boundary
Conditions. Ph.D. thesis, University of Ulm, (2002).

\end{thebibliography}
\end{document}